\setlist{
  listparindent=\parindent,
  parsep=0pt,
}
\numberwithin{equation}{section}
\theoremstyle{plain} 
\newtheorem{theorem}{Theorem}[section]
\newtheorem{Lemma}[theorem]{Lemma}
\newtheorem{Proposition}[theorem]{Proposition}
\theoremstyle{definition} 
\newtheorem{Example}[theorem]{Example}
\newtheorem{Remark}[theorem]{Remark}
\newcommand\CorrespondingAuthor[1]{
  \begingroup
  \def\@makefnmark{}
  \footnotetext{Corresponding author: #1}
  \endgroup
}
\renewenvironment{abstract}{%
  \small%
  \providecommand\keywords{%
    \par\medskip\noindent\textit{Keywords:}\xspace}%
  \begin{center}%
    \bfseries \abstractname\vspace{-.5em}\vspace{\z@}%
  \end{center}%
  \quote%
}
{
\endquote}
\DeclareMathAlphabet{\mathpzc}{T1}{pzc}{m}{it}
\newcommand{\RL}{\mathbb{R}}
\newcommand{\nat}{{\mathbb N}}
\newcommand{\e}{\mathrm{e}}
\newcommand{\dd}{\mathrm{d}}
\newcommand{\oh}{{\mathit{o}}}
\newcommand{\Oh}{{\mathit{O}}}
\newcommand{\Prob}{\mathbb{P}}
\newcommand{\Ftail}{\overline F\,}
\newcommand{\Gtail}{\overline G\,}
\newcommand{\Htail}{\overline H}
\def\*{\discretionary{}{\hbox{\ensuremath\cdot}\thinspace}{}}
\begin{document}

\title{Asymptotic tail behavior of phase-type scale mixture distributions}
\author{Leonardo Rojas-Nandayapa}
\author{Wangyue Xie}
\affil{\footnotesize School of Mathematics and Physics, The University of Queensland, 
   Brisbane, QLD, Australia, \authorcr l.rojas@uq.edu.au, w.xie1@uq.edu.au}
\date{}
\maketitle
\begin{abstract}

We consider \emph{phase-type scale mixture} distributions 
which correspond to distributions of a product of two independent random variables:  
a phase-type random variable $Y$ and a nonnegative but  
otherwise arbitrary random variable $S$ called 
the \emph{scaling random variable}.  
We investigate conditions for such a class of distributions
to be either light- or heavy-tailed, we explore subexponentiality
and determine
their maximum domains of attraction.
Particular focus is given to  phase-type scale mixture distributions where 
the scaling random variable $S$ has discrete support ---
such a class of distributions has been recently used in risk applications
to approximate heavy-tailed distributions.
Our results are complemented with several examples.

\keywords{ phase-type; Erlang; discrete scale mixtures; infinite mixtures; heavy-tailed;
subexponential; maximum domain of attraction; products; ruin probability.}

\end{abstract}

\section{Introduction}
In this paper, we consider the class of nonnegative distributions defined by
the \emph{Mellin--Stieltjes convolution} \citep{Bingham1987} 
of two nonnegative distributions $G$ and $H$, given by
\begin{equation}\label{PHSM}
 F(x)=\int_0^\infty G(x/s) \dd H(s),\qquad x\ge0.
\end{equation} 
A distribution of the form \eqref{PHSM} will be called a 
\emph{phase-type scale mixture} if $G$ is a (classical) phase-type (PH) distribution \citep[cf.][]{Latouche1999}
and $H$ is a proper nonnegative distribution that we shall call the 
\emph{scaling distribution}.
A phase-type scale mixture distribution can be seen as the distribution
of a random variable $X:=S\cdot Y$
where $S\sim H$ and $Y\sim G$; accordingly, $S$ is referred as the 
\emph{scaling random variable}.  This terminology is also explained using conditional arguments: 
observe that $(X|S=s) \sim G_s$ where $G_s(x):=G(x/s)$ corresponds to the distribution of the 
(scaled) random variable $s\cdot Y$ which is itself a 
PH distribution, so the distribution $F$ can be thought as a mixture of 
the scaled PH distributions in $\{G_s:s>0\}$ with respect to 
the scaling distribution $H$.

Our motivation for studying the tail behavior of phase-type scale 
mixtures is their use for approximating heavy-tailed distributions in risk applications \citep{Bladt2014b}.
To introduce such an approach, we shall first recall that the family of (classical) phase-type (PH) 
distributions, which corresponds to 
distributions of absorption times of Markov jump processes with one absorbing state 
and a finite number of transient states.
The PH class is particularly attractive since it is tractable and possesses many desirable properties 
(densities, cumulative distributions, moments and integral transforms 
have closed-form expressions in terms of matrix exponentials; it is a closed class under 
scaling, finite mixtures and finite convolutions (cf.~\cite{Assaf1982,O'Cinneide1992}). 
The PH class is popular for modelling purposes because it is dense in the nonnegative 
distributions \citep[cf.][]{Asmussen2003}, so one could in principle approximate 
any nonnegative  distribution with an arbitrary precision.  
This classical approach has been widely studied and  
reliable methodologies for approximating nonnegative
distributions are already available 
\citep[cf.][]{AsmussenNermanOlsson96}.

However, distributions in the PH class are light-tailed
and belong to the Gumbel domain of attraction exclusively \citep{Kang1999}.
Therefore, the PH class cannot 
correctly capture the characteristic behavior of a heavy-tailed distribution 
in spite of its denseness.  In fact,  
this approach may deliver unreliable approximations for important quantities of interest,
such as the ruin probability of a Cram\'er--Lundberg risk process with heavy-tailed claim size
distributions
\citep{VatamidouAdanVlasiouZwart2014}.
As an alternative, the PH class has been extended to distributions
of absorption times having a countable number of transient states
\citep[this approach is attributed to][]{Neuts1981}. 
The later class, which goes under the name of infinite dimensional phase-type distributions
(IDPH), is known to contain heavy-tailed distributions.  
Nevertheless, the IDPH class is no longer mathematically tractable and it is not 
fully documented yet (to the best of the
authors' knowledge, one of the few published references available outlining its mathematical properties
is \cite{Dinghua1996}; another reference of interest is \cite{Greiner1999}, who 
consider infinite mixtures of exponential distributions 
to approximate power-tailed distributions).

To address this issue, \cite{Bladt2015} propose the use of phase-type scale 
mixtures having discrete scaling distributions to approximate heavy-tailed distributions. 
Such a class forms a structured subfamily of the IDPH class that contains the PH class,
so it is trivially dense in the nonnegative distributions.  Two important advantages over the more general
IDPH class are that the class of phase-type scale mixture distributions
is mathematically tractable and that it contains a rich variety of heavy-tailed distributions.

The class of phase-type scale mixture distributions has great potential in applications in engineering,
finance and specifically in insurance.  
As an example of the later, 
\cite{Bladt2015} provide  renewal results
that can be applied to obtain exact expressions for the ruin probability
of a classical Cram\'er--Lundberg risk process having claim sizes distributed according
to a phase-type scale mixture distribution with discrete scaling. 
This approach is further explored in 
\cite{Nardo2016}, where a systematic methodology for approximating arbitrary heavy-tailed
distributions via phase-type scale mixtures is provided; such a formulation provides
simplified formulas for approximating ruin probabilities with arbitrary claim size distributions.  
Furthermore, \cite{BladtRojas2017} provide statistical inference procedures 
based on the EM algorithm to adjust phase-type scale mixtures to heavy-tailed data/distributions. 
Other references of interest that apply similar ideas to risk models include \cite{HashorvaPakesTang2010} and \cite{VatamidouAdanVlasiouZwart2012}.  

In spite of the denseness  and 
the mathematically tractability of the class of phase-type 
scale mixtures, the tail properties of the proposed class 
are not fully understood yet; this paper  concentrates on this issue.
In particular, a key aspect in the successful  approximation of heavy-tailed distributions
via phase-type scale mixtures is the appropriate selection of the scaling distribution.
This paper focuses on the theoretical foundations justifying the selections made in some 
of the applications mentioned above, as well as on providing general guidelines for selecting
appropriate scaling distributions.
We collect and adapt some known results which are available in different contexts, 
and we prove new results that
will allow us to provide a characterization of the tail behavior of phase-type scale mixtures,
as well as a classification of their maximum domains of attraction. 
We expect our results to be useful for modelling purposes by providing a better understanding of the advantages
and limitations of such an approach,
as well as providing criteria for
selecting appropriate scaling distributions for approximating general heavy-tailed distributions.  
Our results are summarized below.

Firstly, we concentrate on classifying light- and heavy-tailed distributions.
A phase-type scale mixture
is heavy-tailed if and only if its scaling distribution has unbounded support.
An interesting heuristic interpretation of this result is as follows: a PH random variable  
multiplied with a
random variable $S$ is heavy-tailed iff $S$ has unbounded support. 
We provide a simple proof of this fact
but we remark that a proof (unknown to us until recently) was already provided in a different context \citep[cf.][]{SuChen2006,Tang2008b}.

Secondly, we focus on the maximum domains of attraction and subexponential properties
of the class of phase-type scale mixtures.
A classical result for the Fr\'echet case 
is Breiman's lemma \citep{Breiman1965}, which  implies that  
a phase-type scale mixture with a regularly varying scaling distribution
remains regularly varying with the same index (hence subexponential).
An analogue closure property exists for the class of Weibullian
distributions \citep{ArendarczykDebicki2011}.
In addition, we investigate analogue results for  
scaling distributions in the Gumbel domain of attraction. We  
show that if a certain higher order derivative of the Laplace--Stieltjes
transform of the reciprocal of the scaling random variable
$\mathcal{L}_{1/S}(\theta)$
is a von Mises function, then $F\in\mathrm{MDA}(\Lambda)$; in addition, we provide
a verifiable condition for subexponentiality.

We then specialize in phase-type scale mixture distributions having discrete support.
Such a class of distributions is of critical importance in applications due to its mathematical tractability, 
as these correspond to distributions
of the absorption time of a Markov jump process having an infinite number of transient states.
We outline a simple methodology which allows us to determine their asymptotic behavior by constructing
a phase-type scale mixture distribution with continuous scaling and having an
asymptotically proportional tail probability.
This methodology can be \emph{reverse-engineered} so we can construct discrete scaling distributions
for approximating the tail probability of some arbitrary target distributions.

The rest of the paper is organized as follows. In Section \ref{mysec2}, 
we set up notations and  summarize some of the standard facts
on heavy-tailed,  phase-type and related  distributions.  
Then we introduce the class of phase-type scale mixtures and examine some of its asymptotic properties. 
Our main results are presented in Section \ref{mysec3} and \ref{mysec4}.  Section \ref{mysec3} is devoted to the
general case, while Section \ref{mysec4} is specialized in discrete scaling distributions.
In Section \ref{mysec5}, we present our conclusions.

\section{Preliminaries}
\label{mysec2}

In this section we provide a summary of some of the concepts needed for this paper.
Most results in this section are  standard. 
A reader familiar with phase-type distributions and 
extreme value theory can safely skip to subsection \ref{PHM}.


First we consider the class of \emph{phase-type} 
(PH) distributions.  When a distinction is needed, we will 
refer to this class of distributions as \emph{classical}, in order to make
a clear distinction from the class of phase-type scale mixture distributions.
A classical phase-type distribution corresponds to the distribution of 
the absorption time  of a Markov jump process 
$\{X_t\}_{t\geq0}$ with a finite transient state space $E=\{1,2,\cdots,p\}$
and one absorbing state $0$
\citep[cf.][]{Latouche1999,Asmussen2003}.
Phase-type distributions are characterized by a $p$-dimensional row vector $\boldsymbol{\beta}=
(\beta_1,\cdots,\beta_p)$ (corresponding to the probabilities of starting
the Markov jump process in each of the transient states), and
an intensity matrix 
\begin{equation*}
\mathbf{Q}=\left(\begin{array}{cc}0 &\mathbf{0}  \\
             \boldsymbol{\lambda}  & \mathbf{\Lambda} \end{array}\right),
\end{equation*}
where $\bm{\Lambda}$ is a $p\times p$ sub-intensity matrix. Since rows 
in a intensity matrix must sum to $0$, we also have $\bm{\lambda}=-\bm{\Lambda e}$, 
where $\bm{e}$ is the $p-$dimensional column vector of $1$s.
Phase-type distributions are denoted $\mbox{PH}(\boldsymbol{\beta},\boldsymbol{\Lambda})$, 
and their cumulative distribution functions  are given by
\begin{equation*}
 G(x)=1-\boldsymbol{\beta}\e^{\mathbf{\Lambda}x}\mathbf{e},\qquad \forall x>0.
\end{equation*}

In this paper, we are particularly interested in distributions of scaled phase-type random variables
$s\cdot Y$ where  $Y\sim\mathrm{PH}(\boldsymbol{\beta},\mathbf{\Lambda})$ and $s>0$. 
From the expression above, it follows easily 
that $s\cdot Y\sim\mathrm{PH}(\boldsymbol{\beta},\mathbf{\Lambda}/s)$, so the class
of phase-type distributions is closed under scaling transformations. 
The following is a well known result describing the tail behavior of phase-type distributions
\citep[cf.][]{Asmussen2003}:
\begin{Proposition}
\label{myprop2.1}
Let $G_s\sim\mathrm{PH}(\boldsymbol{\beta}$, $\mathbf{\Lambda}/s$). 
The tail probability of $G_s$ can be written as
\begin{equation*}
\Gtail_s(x)=\sum_{j=1}^{m}\sum_{k=0}^{\eta_j-1}\left(\frac{x}{s}\right)^k\e^{\Re(-\lambda_j) x/s}
 \bigg[c_{jk}^{(1)}\sin(\Im(-\lambda_j)x/s)+c_{jk}^{(2)}\cos(\Im(-\lambda_j) x/s)\bigg].
\end{equation*}
Here $m$ is the number of Jordan blocks of the matrix $\mathbf{\Lambda}$,
$\{-\lambda_j:j=1,\dots,m\}$ are the corresponding eigenvalues and 
$\{\eta_{j}:j=1,\dots,m\}$ the dimensions of the Jordan blocks.
The values $c_{jk}^{(1)}$, $c_{jk}^{(2)}$ are constants depending on the initial distribution $\boldsymbol{\beta}$,
the dimension of the $j$-th Jordan block $\eta_j$ and the generalized eigenvectors of $\mathbf{\Lambda}$.
\end{Proposition}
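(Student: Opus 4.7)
The plan is to start from the well-known matrix-exponential representation of the phase-type survival function,
\begin{equation*}
\Gtail_s(x) = \boldsymbol{\beta}\,\e^{\mathbf{\Lambda} x/s}\,\mathbf{e},
\end{equation*}
and then reduce the matrix exponential by passing $\mathbf{\Lambda}$ to its Jordan normal form. Write $\mathbf{\Lambda} = \mathbf{P}\mathbf{J}\mathbf{P}^{-1}$ with $\mathbf{J} = \diag(\mathbf{J}_1,\ldots,\mathbf{J}_m)$, where $\mathbf{J}_j = \lambda_j \mathbf{I}_{\eta_j} + \mathbf{N}_j$ is the Jordan block of size $\eta_j$ associated with eigenvalue $\lambda_j$ and $\mathbf{N}_j$ is the corresponding nilpotent shift satisfying $\mathbf{N}_j^{\eta_j}=\mathbf{0}$. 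Then $\e^{\mathbf{\Lambda} x/s} = \mathbf{P}\,\e^{\mathbf{J} x/s}\,\mathbf{P}^{-1}$, and because $\lambda_j \mathbf{I}_{\eta_j}$ and $\mathbf{N}_j$ commute, the truncated series
\begin{equation*}
\e^{\mathbf{J}_j x/s} = \e^{\lambda_j x/s}\sum_{k=0}^{\eta_j-1}\frac{(x/s)^k}{k!}\,\mathbf{N}_j^k
\end{equation*}
gives a block-by-block description of $\e^{\mathbf{J} x/s}$.

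Next, I would pull the scalar factor $\e^{\lambda_j x/s}$ apart via Euler's formula:
\begin{equation*}
\e^{\lambda_j x/s} = \e^{\Re(\lambda_j) x/s}\bigl[\cos(\Im(\lambda_j) x/s) + \ih\sin(\Im(\lambda_j) x/s)\bigr].
\end{equation*}
Writing out $\boldsymbol{\beta}\,\mathbf{P}\,\e^{\mathbf{J} x/s}\,\mathbf{P}^{-1}\mathbf{e}$, every entry is a finite linear combination, over $j$ and $k\in\{0,\ldots,\eta_j-1\}$, of the functions $(x/s)^k \e^{\lambda_j x/s}$, with weights that are bilinear in the rows of $\boldsymbol{\beta}\mathbf{P}$ (which encode the initial distribution) and the columns of $\mathbf{P}^{-1}\mathbf{e}$ (which encode the generalized eigenvectors), scaled by the structure of $\mathbf{N}_j^k$. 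Since $\mathbf{\Lambda}$ is a real matrix, complex eigenvalues come in conjugate pairs with identical Jordan structure, and the coefficients attached to $\lambda_j$ and $\bar\lambda_j$ are complex conjugates. Summing each such pair eliminates the imaginary part and produces real coefficients $c_{jk}^{(1)}$, $c_{jk}^{(2)}$ in front of $(x/s)^k\e^{\Re(\lambda_j)x/s}\sin(\Im(\lambda_j)x/s)$ and $(x/s)^k\e^{\Re(\lambda_j)x/s}\cos(\Im(\lambda_j)x/s)$, respectively; for real eigenvalues $\Im(\lambda_j)=0$ and only the cosine term survives with coefficient absorbed into $c_{jk}^{(2)}$.

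The hardest part is really the bookkeeping: keeping the indexing over Jordan blocks consistent when conjugate pairs are combined and verifying that $c_{jk}^{(1)}, c_{jk}^{(2)}$ depend only on $\boldsymbol{\beta}$, $\eta_j$, and the generalized eigenvectors of $\mathbf{\Lambda}$. This is straightforward once one writes the sum explicitly, since the polynomial factors $(x/s)^k/k!$ come only from the nilpotent parts $\mathbf{N}_j^k$ and the bilinear pairings with $\boldsymbol{\beta}$ and $\mathbf{e}$ can be read off the columns of $\mathbf{P}$. No analytic input beyond Jordan decomposition and Euler's formula is needed, so the statement follows by direct computation once the decomposition is carried out.
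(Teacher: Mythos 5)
Your derivation is correct: starting from $\Gtail_s(x)=\boldsymbol{\beta}\,\e^{\mathbf{\Lambda}x/s}\,\mathbf{e}$, passing to the Jordan form so that the exponential of each block truncates at $\mathbf{N}_j^{\eta_j-1}$, splitting $\e^{\lambda_j x/s}$ by Euler's formula, and pairing conjugate eigenvalues to obtain real coefficients $c_{jk}^{(1)},c_{jk}^{(2)}$ is exactly the standard argument behind this representation. The paper itself gives no proof of Proposition \ref{myprop2.1} (it is stated as a known fact with citations to Latouche and Ramaswami, Asmussen, and related references), and your write-up is precisely the computation those references carry out, so there is nothing to add beyond the bookkeeping you already acknowledge.
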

All eigenvalues of a sub-intensity matrix $\mathbf{\Lambda}$ have
negative real parts and the one with the largest absolute value is always real.  Therefore,
the asymptotic behavior of a scaled phase-type distribution 
is determined by the largest eigenvalue and the largest dimension among
the Jordan blocks associated to the largest eigenvalue 
\citep[see also][]{Asmussen2003,Alexandru2006}. 
It is also well known that if the sub-intensity matrix  $\mathbf\Lambda$ is irreducible, then the tail probabilities of phase-type distributions
decay exponentially \citep[cf.\, Proposition IX.1.8][]{AsmussenAlbrecher2011}.
The assumption that the matrix $\mathbf{\Lambda}$ is not irreducible can be 
further relaxed if all eigenvalues are real.
Also notice that if all the eigenvalues of $\mathbf{\Lambda}$ 
are real ($\Im(-\lambda_j)=0$), then 
\begin{equation}
\label{PH tail}
\Gtail_s(x)=\sum_{j=1}^{m}\sum_{k=0}^{\eta_j-1}c_{jk}\left(\frac{x}{s}\right)^k\e^{-\lambda_j x/s}.
\end{equation}

Next, we introduce the class of heavy-tailed distributions that will be used
in this paper (various other definitions of heavy-tailed distributions are available
in the literature) and discuss several important subfamilies of heavy-tailed distributions.
We also provide a brief summary of results connecting extreme value theory
with heavy-tailed distributions and subexponentiality.

We say that a nonnegative distribution $H$ is \emph{heavy-tailed}  if
\begin{equation*}
\limsup\limits_{s\to\infty}\Htail(s)\e^{\theta s}=\infty,\quad\forall\theta>0,
\end{equation*}
where $\Htail(s)=1-H(s)$ is the \emph{tail probability} of the distribution $H$.
Otherwise, we say that $H$ is a \emph{light-tailed} distribution.
The definition of light/heavy-tailed distributions is often considered 
too general for most practical purposes and
it is more common to work instead with certain families of 
distributions.  For instance,
the so-called \emph{Embrechts--Goldie} class of distributions \citep{EmbrechtsGoldie1980}, 
denoted $\mathcal{L}(\lambda)$,
consists of nonnegative distributions $H$ having the property
\begin{equation*}
\lim_{s\to\infty}\dfrac{\Htail(s-t)}{\Htail(s)}=\e^{\lambda t}, \quad \lambda\geq 0,\forall{t}.
\end{equation*}
Distributions in the class $\mathcal{L}(0)$ are
heavy-tailed  and these are known as long-tailed distributions. 
In contrast, if $\lambda>0$ then a distribution in the class $\mathcal{L}(\lambda)$ is light-tailed. 
From Proposition \ref{myprop2.1}, it is clear that a PH distribution is in $\mathcal{L}(\lambda)$ where $-\lambda$
is the largest eigenvalue of the sub-intensity matrix $\boldsymbol{\Lambda}$.

An important subclass of heavy-tailed distributions is that of subexponential distributions
\cite[cf.][]{FossKorshunovZachary2011}. 
Such a class of distributions contains practically all the heavy-tailed distributions commonly used.
We say that $H$ belongs to the class of subexponential distributions, denoted $H\in\mathcal{S}$, if
\begin{equation*}
  \limsup_{s\to\infty}\frac{\Htail^{*n}(s)}{\Htail(s)}=n,
\end{equation*}
where $\Htail^{*n}$ is the tail probability of the $n$-fold convolution of $H$.

Another important subclass of subexponential distributions that is widely applied in actuarial sciences 
is the class of regularly varying distributions.
A distribution $H$ is regularly varying with index $\alpha>0$ if
\begin{equation}
\lim_{s\to\infty}\frac{\Htail(st)}{\Htail(s)}=t^{-\alpha},\quad t>0,
\end{equation}
and it is denoted $H\in\mathcal{R}_{-\alpha}$.
Otherwise, if the limit above is 
$0$ for all $t>1$, then we say that $H$ is a distribution of \emph{rapid variation} 
and it is denoted $H\in\mathcal{R}_{-\infty}$ \citep[cf.][]{Bingham1987}.

\subsection{Phase-type scale mixtures}\label{PHM}
Next we introduce the class of phase-type scale mixture distributions which is central
for this paper.
We say a distribution $F(x)$ is a \emph{phase-type scale mixture}  with scaling distribution $H$
and phase-type distribution $G\sim \mathrm{PH}(\boldsymbol{\beta},\mathbf{\Lambda})$, 
if the distribution $F$ can be written as the Mellin--Stieltjes
convolution of $H$ and $G$ (see equation \eqref{PHSM} for a definition).
For this definition to be valid, it is implicit that $H$ must be nonnegative without
an atom at $0$.
Particularly, when the scaling 
distribution  $H$ is discrete and supported over a countable set of nonnegative numbers
$\{s_i:i\in\nat\}$, then the Mellin--Stieltjes convolution in \eqref{PHSM}
reduces to the following infinite series:
\begin{equation*}
F(x)=\sum_{i=1}^{\infty}p(i) G(x/s_i),
\end{equation*}
where $p(i):=H(s_i)-H(s_{i-1})$ is the probability 
mass function of $H$ with $s_0=0$. 
It is not difficult to see that a phase-type scale mixture distribution is absolutely continuous
 and its density function can be written as
 \begin{align*}
f(x)=\int_0^\infty \frac{g(x/s)}s\dd H(s),
 \end{align*}
 where $g$ is the density of the phase-type distribution.
The tail probability of a phase-type scale mixture   
$\Ftail:=1-F$ can also be written as a Mellin--Stiltjes convolution of $H$ and $\overline G$:
\begin{align*}
\Ftail(x)
               =1-\int_{0}^{\infty}G(x/s)\dd H(s)
               =\int_0^{\infty}(1-G(x/s))\dd H(s)
               =\int_0^{\infty}\Gtail(x/s)\dd H(s).
\end{align*}
Therefore, using proposition \ref{myprop2.1} it is straightforward to see that there exist
constants $c_{jk}^{\prime}$ and $c_{k}^{\prime}$, such that
\begin{align*}
\Ftail(x)&\le
\sum_{j=1}^{m}\sum_{k=0}^{\eta_j-1}c_{jk}^{\prime}
\int_0^{\infty}\left(\frac{x}{s}\right)^{k}\e^{\Re(-\lambda_j) x/s}\dd H(s)\le \sum_{k=0}^{\eta-1}c_{k}^{\prime}\int_0^{\infty}\left(\frac{x}{s}\right)^{k}\e^{-\lambda x/s}\dd H(s).
 \label{FullTerms}
\end{align*}
Hence, only the largest real eigenvalue determines the asymptotic behavior
of a phase-type scale mixture distribution.

In this paper, we are particularly interested in providing sufficient conditions for
a phase-type scale mixture to be subexponential. However, the task of determining whether
a given heavy-tailed distributions is subexponential or not can be very challenging. 
We will resort to extreme value theory to address this issue,
since there exist a variety of results relating the subexponential property with
maximum domains of attraction. 

The Weibull domain of attraction is composed
of distributions with support bounded above, so a phase-type scale mixture 
cannot belong to such domain.
The Fr\'echet domain of attraction is characterized by \emph{regular variation} \citep{Haan1970}:

 \begin{equation*}
\Htail\in\mathcal{R}_{-\alpha}\quad\Longleftrightarrow\quad H\in\text{MDA}(\Phi_\alpha).
\end{equation*}
This characterisation is relevant to us because regularly varying distributions are
subexponential.
The Gumbel domain of attraction is more involved. It contains both light- and 
heavy-tailed distributions. A number of results exist for determining the 
Gumbel domain of attraction and subexponentiality of a certain distribution. 
We have listed these in the Appendix since these will be used later.

\section{Tail behavior of scaled random variables}
\label{mysec3}
This section is devoted to characterising the tail properties of the 
class of phase-type scale mixture distributions.  Firstly, we collect some relevant 
results about the asymptotic tail behavior of products of random variables, which provide
sufficient conditions on the scaling random variable $S$ for its associated
phase-type scale mixture distribution to be either light- or heavy-tailed. In addition, we extend
this result to provide a criteria for  more general distributions; we also provide a simplified proof (Theorem \ref{Products}).

Secondly, in Subsection \ref{MDA} we focus on determining the maximum domain of attraction 
of a phase-type scale mixture distribution according to its scaling distribution. 
In the Fr\'echet case, Breiman's 
lemma implies that a phase-type scale mixture distribution remains in the Fr\'echet domain
of attraction (hence regularly varying) if the scaling distribution is in the same domain. 
The converse of Breiman's lemma does not hold true in general, and finding sufficient conditions
and counterexamples is considered challenging \citep[cf.][]{MikoschJacobsenRosinski2009,DenisovZwart2005,MikoschJessen2006,Damek2014}.
For the Gumbel case, we provide conditions on the Laplace transform of reciprocal of 
the scaling random variable $1/S$ so the associated phase-type scale mixture distribution belongs to the Gumbel domain 
of attraction, as well as to further determine if it is subexponential. We illustrate with examples that such conditions are verifiable in some important cases. 
In addition, we also analyse the important class of Weibullian distributions (for a definition
see Remark \ref{Weibullian} below)
which posseses a closure property under multiplication \citep{ArendarczykDebicki2011}.
The result in that paper allows to determine the exact tail behavior of a phase-type
scale mixture having a Weibullian scaling distribution.

\subsection{Asymptotic tail behavior}
The tail behavior of the distribution of a product of nonnegative random variables 
has attracted a considerable amount of research interest. For instance,
\cite{SuChen2006} show that if two random variables $S_1$ and $S_2$ are such 
that the distribution of $S_1$ is in $\mathcal{L}(\lambda)$ with $\lambda>0$
and $S_2$ has unbounded support, then the distribution of $S_1\cdot S_2$ is in $\mathcal{L}(0)$ (long-tailed),
and thus heavy-tailed
\citep[see also][]{Tang2008b}.  If one further assumes that $S_2$ is Weibullian 
with parameter $0<p\le 1$,
 then \cite{LiuTang2010} show that the product $S_1\cdot S_2$ is subexponential.
A result which extends beyond the class $\mathcal{L}(\gamma)$ is in \cite{ArendarczykDebicki2011}, 
where it is shown that the product of two Weibullian random variables with parameters $p_1$ and $p_2$ is Weibullian
with parameter $p_1p_2/(p_1+p_2)$ and thus proving that  the product of Weibullians 
can be either light- or heavy-tailed.

These results imply that a phase-type scale mixture distribution is heavy-tailed
if and only if the scaling distribution has unbounded support.
This conclusion can also be obtained from our Theorem \ref{Products} below, where we provide
sufficient conditions under which a product of two general 
random variables can be classified either as light- or heavy-tailed. 
The simplified proof provided here is elementary.

\begin{theorem}\label{Products}
  Consider $S_1$ and $S_2$ two nonnegative independent random variables with unbounded support, 
  where $S_1\sim H_1$ and $S_2\sim H_2$. 
  Let $H$ be the distribution of the product $S_1\cdot S_2$.  
  \begin{enumerate}
   \item If there exist $\theta>0$ and $\xi(x)$ a nonnegative function such that
     \begin{equation}\label{hypo1}
      \limsup_{x\to\infty}\e^{\theta x}\Big(\Htail_1(x/\xi(x))+\Htail_2(\xi(x))\Big)=0,
     \end{equation}
     then $H$ is a light-tailed distribution.
    \item If there exists $\xi(x)$ a nonnegative function such that for all $\theta>0$ it holds that
    \begin{equation}\label{hypo2}
     \limsup_{x\to\infty}\ \e^{\theta x}\,\Htail_1(x/\xi(x))\cdot\Htail_2(\xi(x))=\infty,
    \end{equation}
    then $H$ is a heavy-tailed distribution.
  \end{enumerate}
\end{theorem}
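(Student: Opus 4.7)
The plan is to realise the distribution $H$ as that of the product $X_1 \cdot X_2$, where $X_1\sim H_1$ and $X_2\sim H_2$ are independent, consistent with the scale--mixture construction used throughout the paper. Under this reading both hypotheses encode, respectively, an upper and a lower estimate for $\Htail(x)$ obtained by splitting the event $\{X_1 X_2 > x\}$ at the auxiliary threshold $\xi(x)$.

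For part~1, I would begin from the deterministic inclusion
\[
\{X_1 X_2 > x\} \;\subseteq\; \{X_1 > x/\xi(x)\} \cup \{X_2 > \xi(x)\},
\]
which holds because $X_1\le x/\xi(x)$ together with $X_2\le \xi(x)$ forces $X_1 X_2\le x$. A union bound then gives $\Htail(x) \le \Htail_1(x/\xi(x)) + \Htail_2(\xi(x))$, and multiplying by $\e^{\theta x}$ and invoking \eqref{hypo1} yields $\limsup_{x\to\infty} \e^{\theta x}\Htail(x)=0$. By the characterisation recalled immediately after the definition of heavy--tailedness, this forces $H$ to be light--tailed.

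For part~2, I would use independence of $X_1$ and $X_2$ to turn the reverse inclusion
\[
\{X_1 > x/\xi(x)\}\cap\{X_2 > \xi(x)\} \;\subseteq\; \{X_1 X_2 > x\}
\]
into the product lower bound $\Htail(x) \ge \Htail_1(x/\xi(x))\cdot \Htail_2(\xi(x))$. Hypothesis \eqref{hypo2} then gives $\limsup_{x\to\infty} \e^{\theta x}\Htail(x) = \infty$ for every $\theta>0$, which is precisely the characterisation of heavy--tailedness recorded in the preliminaries.

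The argument is essentially a two--sided squeeze controlled by the scalar threshold $\xi(x)$ and is not technically demanding. The real point is interpretative: one must read ``mixture of $H_1$ and $H_2$'' as the scale mixture, that is, the product distribution, because only under this reading do the sum in \eqref{hypo1} and the product in \eqref{hypo2} arise naturally---the former from a union bound, the latter from an independence factorisation. A convex--combination reading of ``mixture'' would not produce hypotheses of this shape. Some mild bookkeeping is also needed at points where $\xi(x)$ vanishes or diverges, but in those boundary cases one of the two tail factors is automatically zero and the inequalities remain valid.
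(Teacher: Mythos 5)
Your proof is correct and takes essentially the same approach as the paper: the union bound and the independence factorisation at the threshold $\xi(x)$ are exactly the probabilistic restatements of the paper's decomposition of $\Htail(x)=\int_0^\infty \Htail_1(x/s)\,\dd H_2(s)$ into the pieces over $(0,\xi(x))$ and $(\xi(x),\infty)$. Your reading of ``mixture'' as the product (Mellin--Stieltjes convolution) distribution is also the one the paper intends.
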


\begin{proof}
For the first part consider
\begin{align*}
  \limsup_{x\to\infty}\Htail(x)\e^{\theta x}
   &=\limsup_{x\to\infty}\e^{\theta x}\int_{0}^{\infty} \Htail_1(x/s)\dd H_2(s)\\
   &=\limsup_{x\to\infty}\bigg[\e^{\theta x}\int_{0}^{\xi(x)} \Htail_1(x/s)\dd H_2(s)+
    \e^{\theta x}\int_{\xi(x)}^\infty \Htail_1(x/s)\dd H_2(s)\bigg]\\
   &\le\limsup_{x\to\infty}\left[\e^{\theta x}\Htail_1(x/\xi(x))+
    \e^{\theta x} \Htail_2(\xi(x))\right]=0.
 \end{align*}
The last equality holds by the hypothesis \eqref{hypo1}. Hence $H$ is light-tailed.
For the second part consider
\begin{align*}
 \limsup_{x\to\infty}\Htail(x)\e^{\theta x}
   &=\limsup_{x\to\infty}\bigg[\e^{\theta x}\int_{0}^{\xi(x)} \Htail_1(x/s)\dd H_2(s)+
    \e^{\theta x}\int_{\xi(x)}^\infty \Htail_1(x/s)\dd H_2(s)\bigg]\\
   &\ge \limsup_{x\to\infty}\left[\e^{\theta x}\Htail_1(x/\xi(x))\Htail_2(\xi(x))\right]=\infty.
 \end{align*}
The last equality holds by hypothesis \eqref{hypo2}. Hence $H$ is heavy-tailed.
\end{proof}

The conditions in Theorem \ref{Products} can be easily verified 
and  enables us to provide a classification of the asymptotic tail behavior of products of
random variables with more general distributions. 
Notice that the distributions considered in \cite{SuChen2006} correspond to distributions 
with log-tail probabilities decaying at a linear rate, i.e. $-\log\overline{H_1}(s)=O(s)$,
while the distributions in \cite{ArendarczykDebicki2011} have log-tail probabilities
decaying at a power rate, 
i.e. $-\log\overline{H_i}(s)=O(s^{p_i})$, $i=1,2$.  
The following example considers distributions with log-tail probabilities decaying  
at an exponential rate, i.e.\ $-\log\overline{H_i}(s)=O(\e^{s})$.
\begin{Example}[Gumbellian products]
Let $H_i(x)=1-\exp\{-\e^{x}+1\}$, $x>0$. We choose $\xi(x)=x^{\gamma}$, with $0<\gamma<1$. Then 
\begin{equation*}
\lim_{x\to\infty}\Htail(x)\e^{\theta x}=\lim_{x\to\infty}\e^{\theta x+1}\left(\exp\{-\e^{x^{1-\gamma}}\}+\exp\{-\e^{x^{\gamma}}\}\right)=0,\quad \forall \theta>0.
\end{equation*}
Then the product of two random variables with Gumbellian-type distributions is always light-tailed.
The same holds true if we replace $H_2$ with a Weibullian distribution with shape parameter $p>1$. 
Choose $\xi(x)=x^{\gamma}$, with $\frac1p\leq\gamma<1$ and observe that
\begin{equation*}
\lim_{x\to\infty}\Htail(x)\e^{\theta x}=\lim_{x\to\infty}\e^{\theta x}\left(\exp\{-\e^{x^{1-\gamma}}+1\}+x^{\delta}\e^{-x^{\gamma p}}\right)=0,\quad \text{for}\ \theta\in(0,1).
\end{equation*}
\end{Example}

%
%

\subsection{Maximum domains of attraction and subexponentiality}
\label{MDA}
The scenario in the Fr\'echet domain of attraction is well understood.
Breiman's lemma 
\citep{Breiman1965} implies that a phase-type scale mixture distribution 
is in the Fr\'echet domain of attraction if its scaling distribution is in the same domain:
\begin{Lemma}[\cite{Breiman1965}]
\label{Breiman}
If $H\in\mathcal{R}_{-\alpha}$
and $M_G(\alpha+\epsilon)<\infty$ for some $\epsilon>0$, then $F\in\mathcal{R}_{-\alpha}$
and  
\begin{equation}
  \Ftail(x) = M_G(\alpha)\Htail(x) (1+\oh(1)),\qquad x\to\infty,
\end{equation}
where $M_G(\alpha)$ is the $\alpha$-moment of $G$.
\end{Lemma}
Phase-type distributions
are light-tailed so all their moments are finite. Therefore,
a phase-type scale mixture distribution with a scaling distribution 
in the Fr\'echet domain of attraction remains in the same domain.
Furthermore, the norming constants for a phase-type scale mixture distribution $F$
can be chosen as the norming constants of $H$ divided by the $\alpha$-moment
of the phase-type distribution $G$, that is
\begin{equation*}
d_n=0, \quad c_n=\dfrac{1}{M_G(\alpha)}\left(\dfrac{1}{\Htail}\right)^{\leftarrow}(n).
\end{equation*}
Moreover, when the conditions of Breiman's lemma are satisfied, then the scaling
and the phase-type scale mixture distributions
are regularly varying with the same index of regular variation, thus implying
that the tail probabilities of both distributions are asymptotically proportional (with the reciprocal of the
$\alpha$-moment of the phase-type distribution being the proportionality constant).
This implies that the class of phase-type scale mixture distributions can provide
exact asymptotic approximations of the tail probabilities of  regularly varying distributions.

It is interesting to note that the converse of Breiman's lemma does not hold true in general. 
Such a problem is considered 
to be challenging and has attracted considerable research interest, thus resulting in a rich
variety of results proving sufficient conditions and counterexamples; 
for instance, \cite{MikoschJessen2006} provide a comprehensive list of earlier references; 
the most general results are given in \cite{MikoschJacobsenRosinski2009} and \cite{DenisovZwart2005} 
(see also \cite{Damek2014} for a multivariate version).
It is not difficult to verify that some subclasses (for instance, exponential, Erlang and hyperexponential) 
of PH distributions 
satisfy the sufficient conditions for the converse of Breiman's lemma provided 
in \cite{MikoschJacobsenRosinski2009}. We also conjecture that in general PH distributions satisfy the above conditions but a proof remains unknown to us.

The situation is less understood in the Gumbel domain of attraction.  
We start by noting that in the Gumbel case, 
a phase-type scale mixture $F$ and its scaling distribution $H$
will have very different tail behaviors (this is contrast to the Fr\'echet
case where Breiman's lemma implies that these have asymptotically proportional tail
behavior).  
In particular, the tail probability of a scaling distribution in the Gumbel 
domain of attraction is tail equivalent to a von Mises functions, hence rapidly varying. 
In such a case the tail distribution of the phase-type scale mixture will be much
heavier than its scaling distribution:
\begin{Proposition}
If  $H\in\mathcal{R}_{-\infty}$, then
\begin{equation}\label{tails differ}
 \limsup_{x\to\infty}\frac{\Htail(x)}{\Ftail(x)}=0.
\end{equation}
\end{Proposition}
\begin{proof} 
To show this we take $t>1$ and observe that there exists a constant $C$ such that
\begin{align*}
\Ftail(x)=\Prob[SY>x]\geq \Prob[SY>x,Y
\geq t]\geq&\Prob[S>x/t]\Prob[Y\geq t]=\Htail(x/t)C,
\end{align*}
Then
\begin{equation*}
\limsup_{x\to\infty}\dfrac{\Htail(x)}{\Ftail(x)}\leq\dfrac{1}{C}\limsup_{x\to\infty}\dfrac{\Htail(x)}{\Htail(x/t)}=0,\quad t>1.
\end{equation*} 
\end{proof}
The lognormal and Weibullian distributions are rapidly varying.  
\begin{Remark}
This result fleshes out a limitation of the aforementioned approach for approximating distributions in
the Gumbel domain of attraction. The tail probability of a phase-type scale mixture 
distribution will be much heavier than its target distribution, if the 
scaling distribution is chosen within the same family of target distributions and with similar parameters.
We show later that in some cases we are able to construct phase-type scale mixture distributions 
with the same asymptotic behavior as their target distributions if
we vary the value of parameters.  Such is the case of Weibullian distributions.
\end{Remark}

Next we look for sufficient conditions of the scaling distribution so its corresponding
phase-type scale mixture will belong to the Gumbel domain of attraction and be
subexponential.
We restrict our focus to phase-type distributions with
sub-intensity matrices having only real eigenvalues.

\begin{theorem}
\label{Lemma.2}
Let $V(x)=(-1)^{\eta-1}\mathcal{L}_{1/S}^{(\eta-1)}(x)$ where
$\eta$ is the largest dimension among the Jordan blocks associated to the largest
eigenvalue of the sub-intensity matrix.
If $V(\cdot)$ is a von Mises function, then $F\in\mathrm{MDA}(\Lambda)$.
Moreover, $F$ is subexponential if
\begin{equation*}
 \liminf_{x\to\infty}\frac{V(tx)V^{\prime}(x)}{V^{\prime}(tx)V(x)}>1,\qquad \forall t>1.
\end{equation*}

\end{theorem}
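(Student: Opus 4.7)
The plan is to reduce the asymptotic analysis of $\Ftail$ to that of the single function $V$, and then read off both the Gumbel and the subexponentiality conclusions directly from the von Mises structure of $V$. Starting from the representation used inside the proof of Lemma \ref{Lemma.1}, write
\begin{equation*}
\Ftail(x)=\sum_{j,k}c_{jk}\,x^k\,U_k(\lambda_j x),\qquad U_k(y):=(-1)^k\mathcal{L}_{1/S}^{(k)}(y)=\Exp[S^{-k}\e^{-y/S}],
\end{equation*}
so that $V=U_{\eta-1}$. The first goal is to prove $\Ftail(x)\sim C\,x^{\eta-1}V(\lambda x)$ as $x\to\infty$, where $-\lambda$ is the largest real eigenvalue of $\mathbf{\Lambda}$ and $C>0$ collects the coefficients $c_{j,\eta-1}$ arising from all Jordan blocks of the maximal dimension $\eta$ attached to $\lambda$. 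Since $U_k^{\prime}=-U_{k+1}$, iterated integration gives $U_k(y)=\frac{1}{(\eta-2-k)!}\int_y^\infty(t-y)^{\eta-2-k}V(t)\,\dd t$ for $k<\eta-1$. Substituting $t=y+a_V(y)u$ with the auxiliary function $a_V:=-V/V^{\prime}$ of $V$, and invoking the defining property $V(y+a_V(y)u)/V(y)\to\e^{-u}$ (locally uniformly in $u$, with a Potter-type bound supplying dominated convergence), yields the Laplace-type asymptotic $U_k(y)\sim a_V(y)^{\eta-1-k}V(y)$. Because $a_V(y)=\oh(y)$, the ratio $x^k U_k(\lambda x)/(x^{\eta-1}V(\lambda x))=(a_V(\lambda x)/x)^{\eta-1-k}\to 0$ for every $k<\eta-1$, and the rapid variation of $V$ kills every term with $\lambda_j>\lambda$. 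Summing the surviving contributions gives the claimed tail equivalence.

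To conclude that $F\in\mathrm{MDA}(\Lambda)$, I verify the standard characterization $\Ftail(x+a(x)t)/\Ftail(x)\to\e^{-t}$ with the auxiliary function $a(x):=a_V(\lambda x)/\lambda=\oh(x)$. The polynomial prefactor contributes $((x+a(x)t)/x)^{\eta-1}\to 1$, while the von Mises property of $V$ delivers $V(\lambda x+a_V(\lambda x)t)/V(\lambda x)\to\e^{-t}$, and the product is $\e^{-t}$.

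For subexponentiality, the density is expanded directly from Proposition \ref{Continuity}: writing the phase-type density in the Jordan basis of $\mathbf{\Lambda}$ produces $f(x)=\sum_{j,k}d_{jk}\,x^k\,U_{k+1}(\lambda_j x)$, and the same dominance argument gives $f(x)\sim D\,x^{\eta-1}V(\lambda x)/a_V(\lambda x)$ with $D>0$. Hence $\Ftail(x)/f(x)\sim (C/D)\,a_V(\lambda x)$ and
\begin{equation*}
\frac{\Ftail(tx)}{f(tx)}\cdot\frac{f(x)}{\Ftail(x)}\longrightarrow\frac{a_V(\lambda tx)}{a_V(\lambda x)},\qquad t>1.
\end{equation*}
The identity $V^{\prime}(y)=-V(y)/a_V(y)$ rewrites this limit as $V(\lambda tx)V^{\prime}(\lambda x)/(V^{\prime}(\lambda tx)V(\lambda x))$ (evaluated at $y=\lambda x$), so the hypothesized $\liminf$ condition forces the $\liminf$ above to exceed $1$, and the Goldie--Resnick criterion recalled in Section \ref{mysec2} yields $F\in\mathcal{S}$. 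The main technical hurdle will be the uniform estimate on $V(y+a_V(y)u)/V(y)$ required to interchange limit and integral in the Laplace step; this is handled through the representation $V(x)=c\exp(-\int_z^x\dd t/a_V(t))$ combined with the slow variation of $a_V$ relative to the identity, which is classical for von Mises functions.
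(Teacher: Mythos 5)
Your proof is correct, and its core strategy coincides with the paper's: establish the tail equivalence $\Ftail(x)\sim C\,x^{\eta-1}V(\lambda x)$, transfer the von Mises/$\Gamma$-variation structure from $V$ to $F$, and feed the auxiliary-function ratio into the Goldie--Resnick criterion (the subexponentiality part is essentially identical in both arguments). You differ from the paper in two places, and in both you do more work. First, the paper justifies the dominance of the $(k,\lambda_j)=(\eta-1,\lambda)$ term by a one-line appeal to rapid variation of $V$, whereas you derive it from Cauchy's repeated-integration formula $U_k(y)=\frac{1}{(\eta-2-k)!}\int_y^\infty(t-y)^{\eta-2-k}V(t)\,\dd t$ together with the Laplace-type asymptotic $U_k(y)\sim a_V(y)^{\eta-1-k}V(y)$, with the dominated-convergence bound $V(y+a_V(y)u)/V(y)\le(1+\epsilon u)^{-1/\epsilon}$ coming from the exponential representation of $V$; this actually supplies the justification the paper leaves implicit. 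Second, for membership in $\mathrm{MDA}(\Lambda)$ you verify the $\Gamma$-variation characterization $\Ftail(x+a(x)t)/\Ftail(x)\to\e^{-t}$ with $a(x)=a_V(\lambda x)/\lambda$, while the paper verifies the second-order von Mises ratio $\Ftail(x)F''(x)/(F'(x))^2\to-1$ by computing separate asymptotics for $F'$ and $F''$; your route avoids differentiating the asymptotic equivalence twice (each such differentiation needs its own dominance argument in the paper), at the cost of invoking the equivalence between $\Gamma$-variation of the tail and the Gumbel domain. Both are standard and valid; no gap.
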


\begin{proof}

We can write that
\begin{equation*}
\Ftail(x)=
\sum_{j=1}^{m}\sum_{k=0}^{\eta_j-1}
\int_0^{\infty}c_{jk}\left(\frac{x}{s}\right)^k\e^{-\lambda_j x/s}\dd H(s)
=\sum_{j=1}^{m}\sum_{k=0}^{\eta_j-1} c_{jk}\, \frac{(-1)^kx^k}{\lambda_j^k}\,
 \mathcal{L}_{1/S}^{(k)}(\lambda_j x).
\end{equation*}

Since $V(x)=(-1)^{\eta-1}\mathcal{L}_{1/S}^{(\eta-1)}(x)$ is a von Mises function, then
$V(x)$ is of rapid variation \citep{Bingham1987}.  
This implies that
\begin{equation}\label{FtailGumbel}
 \Ftail(x)\sim c\frac{x^{\eta-1}}{\lambda^{\eta-1}}V(\lambda x),
\end{equation}
where $c$ is some constant, $-\lambda$ is the largest eigenvalue of the 
sub-intensity matrix and $\eta$ is the largest dimension among the Jordan blocks
associated to $-\lambda$.
Then it is not difficult to see that
\begin{equation*}
 \lim_{x\to\infty}\frac{\Ftail(x)F^{\prime\prime}(x)}{(F^\prime(x))^2}=
 \lim_{x\to\infty}\frac{V(\lambda x)(-V^{\prime\prime}(\lambda x))}
  {\big(-V^{\prime}(\lambda x)\big)^2}=-1.
\end{equation*}
This holds true because by hypothesis $V(x)=(-1)^{\eta-1}\mathcal{L}_{1/S}^{(\eta-1)}(x)$ is a von Mises function.  Hence $F\in\mathrm{MDA}(\Lambda)$
and the first part result follows.  For the second part, we observe that
the auxiliary function $a(x)=\Ftail(x)/F^{\prime}(x)$
obeys the following asymptotic equivalence
\begin{equation*}
 a(x)=\frac{\Ftail(x)}{F^{\prime}(x)}\sim\frac{V(\lambda x)}{-\lambda V^{\prime}(\lambda x)}.
\end{equation*}
The distribution $F$ is subexponential if
\begin{equation*}
 \liminf_{x\to\infty}\frac{a(tx)}{a(x)}=
  \liminf_{x\to\infty}\frac{V(\lambda tx)V^{\prime}(\lambda x)}{V^{\prime}(\lambda tx)V(\lambda x)}>1,\qquad \forall t>1,
\end{equation*}
hence subexponentiality of $F$ follows.

\end{proof}

Theorem \ref{Lemma.2} can be applied to the lognormal case:
\begin{Example}[Lognormal scaling]
\label{myex lognormal}
Assume $H\sim\mathrm{LN}(\mu,\sigma^2$), then $F$ is a subexponential distribution
in the Gumbel domain of attraction.
\end{Example}
\begin{proof}
W.l.o.g.\ we can assume $\mu=0$ since $\e^\mu$ is a scaling constant.
In such a case the symmetry of the normal distribution implies that the Laplace--Stieltjes transform of 
$1/S$ is the same as that of $S$, i.e.
\begin{equation*}
\mathcal{L}_{1/S}(x)=\mathcal{L}_{S}(x).
\end{equation*}

An asymptotic approximation of the $k$-th derivative of the Laplace--Stieltjes transform of the lognormal distribution is given in \cite{Asmussen2014}:
\begin{equation*}
\mathcal{L}_{S}^{(k)}(x)=(-1)^k\mathcal{L}_S(x)\exp\{-k\omega_0(x)+\dfrac{1}{2}\sigma_0(x)^2k^2\}(1+\oh(1)),
\end{equation*}
where
\begin{equation*}
\omega_k(x)=\mathcal{W}(x\sigma^2\e^{k\sigma^2}),\quad \sigma_k(x)^2=\dfrac{\sigma^2}{1+\omega_k(x)},
\end{equation*}
and $\mathcal{W}(\cdot)$ is the Lambert W function.
Hence we verify that
\begin{align*}
\lim_{x\to\infty}\dfrac{V(x)(-V^{\prime\prime}(x))}{(-V^{\prime}(x))^2}
&=\lim_{x\to\infty}\dfrac{\e^{-(\eta-1)\omega_0(x)+\frac{1}{2}\sigma_0(x)^2(\eta-1)^2}\cdot\left(-\e^{-(\eta+1)\omega_0(x)+\frac{1}{2}\sigma_0(x)^2(\eta+1)^2}\right)}{\e^{-2\eta\omega_0(x)+\sigma_0(x)^2\eta^2}}\\
&=-\lim_{x\to\infty}\exp\{\sigma_0(x)^2\}=-\lim_{x\to\infty}\exp\left\{\dfrac{\sigma^2}{1+\omega_0(x)}\right\}.
\end{align*}
As $\omega_k(x)$ is asymptotically of order $\log(x)$ as $x\to\infty$, then ${\sigma^2}{(1+\omega_0(x))^{-1}}\to 0$ as $x\to\infty$. Then 
the last limit is equal to $-1$,
so we have shown that $F(x)\in\mathrm{MDA}(\Lambda)$.
Furthermore, 
\begin{align*}
\lim_{x\to\infty}\dfrac{a(tx)}{a(x)}&=\lim_{x\to\infty}\dfrac{(-1)^{\eta-1}\mathcal{L}^{(\eta-1)}_{1/S}(tx)\cdot(-1)^{\eta-1}\mathcal{L}^{(\eta)}_{1/S}(x)}{(-1)^{\eta-1}\mathcal{L}^{(\eta)}_{1/S}(tx)\cdot(-1)^{\eta-1}\mathcal{L}^{(\eta-1)}_{1/S}(x)}\\
&=\lim_{x\to\infty}\dfrac{\e^{-(\eta-1)\omega_0(xt)+\frac{1}{2}\sigma_0(xt)^2(\eta-1)^2}\cdot \e^{-\eta\omega_0(x)+\frac{1}{2}\sigma_0(x)^2\eta^2}}{\e^{-\eta\omega_0(xt)+\frac{1}{2}\sigma_0(xt)^2\eta^2}\cdot \e^{-(\eta-1)\omega_0(x)+\frac{1}{2}\sigma_0(x)^2(\eta-1)^2}}\\
&=\lim_{x\to\infty}\exp\left\{-\omega_0(x)+\omega_0(xt)+\dfrac{1}{2}\sigma_0(xt)^2(2\eta-1)+\dfrac{1}{2}\sigma_0(x)^2(1-2\eta)\right\}\\
&=\lim_{x\to\infty}\exp\left\{-\omega_0(x)+\omega_0(x)+\omega_0(t)+\Oh(\omega_0(x)^{-1})\right\}=t>1.
\end{align*}
Thus $F$ is a subexponential distribution.
\end{proof}

\begin{Example}[Exponential scaling]
\label{myex4}
Let $H\sim\exp(\beta)$. Then $F$ is a subexponential distribution in
 the Gumbel domain of attraction.
\end{Example}
\begin{proof}
Observe that $1/S$ has an inverse gamma distribution with a Laplace--Stieltjes transform 
given in terms of a modified Bessel function of the second kind \citep{Raqab1965}:
\begin{equation*}
\mathcal{L}_{1/S}(x)=\int_0^{\infty}\e^{- x/s}\beta\e^{-\beta s}\dd s=2\sqrt{\beta x}\text{BesselK}(1, 2\sqrt{\beta x}).
\end{equation*}
Furthermore, its $n$-th derivative can be calculated explicitly also in terms of a modified Bessel function of the second kind:
\begin{equation*}
\mathcal{L}_{1/S}^{(n)}(x)=\int_0^{\infty}\left(-\frac{1}{s}\right)^n\e^{-x/s}\beta\e^{-\beta s}\dd s=(-1)^n\cdot 2\,\beta^{\frac{n+1}{2}}x^{-\frac{n-1}{2}}\text{BesselK}(n-1, 2\sqrt{\beta x}).
\end{equation*}
Asymptotically it holds true that 
\begin{equation*}
\mathcal{L}_{1/S}^{(n)}(x)\sim (-1)^n\sqrt{\pi}\beta^{\frac{2n+1}{4}} x^{-\frac{2n-1}{4}}\e^{-2\sqrt{\beta x}},\quad x\to\infty.
\end{equation*}
Hence, it follows that
\begin{equation*}
 \lim_{x\to\infty}\dfrac{V(x)(-V^{\prime\prime}(x))}{(-V^{\prime}(x))^2}
=-1.
\end{equation*}
Therefore, $V(x)$ is a von Mises function and $F\in\mathrm{MDA}(\Lambda)$. 
Moreover, if $t>1$ then
\begin{align*}
\lim_{x\to\infty}\dfrac{a(tx)}{a(x)}=\lim_{x\to\infty}\dfrac{V(tx)V^{\prime}(x)}{V^{\prime}(tx)V(x)}=\sqrt{t}>1.
\end{align*}
Thus F is a subexponential distribution.

\end{proof}

\begin{Remark}
 Notice that it is possible to generalize the result of the previous example for a
 gamma scaling distribution, because an expression
 for the Laplace--Stieltjes transform of an inverse gamma distribution is known and
 given in terms of a modified Bessel function of the second kind.  However, it involves 
 a number of tedious calculations and therefore omitted.
 Note as well that in such a case it is possible to test directly if $\Ftail$
 is a von Mises function, but the calculations become cumbersome.  
 Finally, we remark that the results of \cite{LiuTang2010} imply the
 subexponentiality of the exponential case.
\end{Remark}

\begin{Remark}
If $H$ is a discrete scaling distribution, then we can obtain an analogue result to that of 
of Theorem \ref{Lemma.2}. Define 
\begin{equation*}
\mathcal{DL}_{1/S}(x)=\sum_{i=1}^{\infty}\e^{-x/i}p(i)
\end{equation*}
as the Laplace--Stieltjes transform of discrete scaling random variable 
$S$ with probability mass function $p(i)$.
Then the tail probability of the phase-type scale mixture is:
\begin{equation*}
\Ftail(x)=
\sum_{j=1}^{m}\sum_{k=0}^{\eta_j-1}
\sum_{i=1}^{\infty}c_{jk}\left(\frac{x}{i}\right)^k\e^{-\lambda_j x/i}p(i)
=\sum_{j=1}^{m}\sum_{k=0}^{\eta_j-1} c_{jk}\, \frac{(-1)^kx^k}{\lambda_j^k}\,
 \mathcal{DL}_{1/S}^{(k)}(\lambda_j x).
\end{equation*}
If $V(x)=(-1)^{\eta-1}\mathcal{DL}_{1/S}^{(\eta-1)}(x)$ is a von Mises function,
then $F\in\mbox{MDA}(\Lambda$).
\end{Remark}

We close this section with an important remark regarding Weibullian scalings.

\begin{Remark}[Weibullian scaling]
\label{Weibullian}
A nonnegative 
distribution $H$ is said to be Weibullian with shape parameter $p>0$ \citep{ArendarczykDebicki2011} if 
\begin{equation*}
\Htail(s)= Cs^{\delta}\exp(-\lambda s^{p})(1+o(1)),\quad  \lambda,C>0, \delta\in\RL.
\end{equation*}
A Weibullian distribution with parameter $p$ is heavy-tailed if $0<p<1$, while
it is light-tailed if $p\ge 1$.
Notice that a phase-type distribution is
Weibullian with shape parameter equal to $1$.  Therefore, Lemma 2.1 of \cite{ArendarczykDebicki2011}
implies that a phase-type scale mixture having 
a Weibullian scaling distribution with scale parameter $p$ will be Weibullian with shape 
parameter ${p_1}(1+p_1)^{-1}<1$, thus heavy-tailed. 
Furthermore, Lemma 2.1 in \cite{ArendarczykDebicki2011} provides exact expressions for each of the parameters 
$C$, $\delta$ and $\lambda$, so in principle one can use this result to replicate 
exactly the tail behavior of a Weibullian distribution via a phase-type scale mixture distribution. 
\end{Remark}


\section{Discrete scaling distributions}
\label{mysec4}
Next we focus on the case of phase-type scale mixture distributions having 
scaling distributions supported over countable sets of strictly positive numbers.
These distributions are particularly tractable  since these 
correspond to distributions of absorption times of Markov jump processes with an infinite number of transient states.  
This class of distributions is of great importance for applications involving heavy-tailed phenomena,
since  a variety of quantities of interest can be calculated exactly.  
Such is the case of ruin probabilities in the Cr\'amer-Lundberg process 
having claims sizes distributed according to a phase-type scale mixture
\citep[cf.][]{Bladt2014b,Nardo2016}.  Notice for instance, that such exact results
are not available for the case of continuous scaling distributions.

We remark however, that some of the methodologies
for determining domains of attraction and subexponentiality described in the previous section are not always
implementable in a straightforward way for discrete scaling distributions. 
One of the main difficulties is the calculation of asymptotic equivalent expressions for the infinite series defining the 
tail probabilities. Below we describe a simple methodology which can be used to extend results for 
continuous scaling distributions to their discrete scaling distributions counterparts; 
such a methodology provides mild conditions under which the asymptotical behavior of an infinite
series is asymptotically equivalent to that of a certain function defined via a definite integral.

\begin{Proposition}
\label{integral approx}
Let $I_u:\mathbf{Z}^+\to\RL^+$ be collection of functions indexed by $u\in(0,\infty)$.
Suppose that for each $u>0$ there exists a measurable and bounded function $I'_u:\RL^+\to\RL$ such that $I(u;k)=I'(u;k)$ 
for all $k\in\mathbb{Z}^+$ and
\begin{equation*}
\int_{0}^{\infty}I'(u;y)\dd y - M(u)\le\sum_{k=0}^{\infty}I(u;k)\le\int_{0}^{\infty}I'(u;y)\dd y +M(u),
\end{equation*}
where $M(u)\ge\max\{I'(u;y):y>0\}$ is some upper bound for the function $I'(u;y)$.  If
\begin{equation*}
\lim\limits_{u\to\infty}\dfrac{M(u)}{\int_0^{\infty}I'(u;y)\dd y}=0,
\end{equation*}  then the following asymptotic relationship holds
\begin{align*}
\lim_{u\to\infty}\dfrac{\sum_{k=0}^{\infty}I(u;k)}{\int_0^{\infty}I'(u;y)\dd y}=1.
\end{align*}
\end{Proposition}

The method provides a verifiable condition under which the infinite series can be replaced
by an asymptotic integral. The next example is taken from \cite{Bladt2014b}. 
\begin{Example}[Zeta scaling]
\label{Zeta}
Let $\alpha\ge 2$ and assume $H\sim\mathrm{Zeta}(\alpha)$.
Such a distribution is determined by $p(i)=i^{-\alpha}/\zeta(\alpha)$, $i\in\nat$ and
$\zeta(\cdot)$ is the Riemann zeta function. 
Then $F$ is in the Fr\'echet domain of attraction.
\end{Example}
We remark that Breiman's lemma could have been used instead to determine the exact 
asymptotic behavior because the tail probability $\Htail(i)$, $i=1,2,\dots$ 
forms a regularly varying sequence, so $\Htail\in\mathcal{R}_{-\alpha}$ \citep{Bingham1987}. 
Nevertheless, this example is  included here to illustrate the simplicity of the method proposed.
\begin{proof}
$H$ is supported over all the natural numbers, so the tail probability of corresponding phase-type scale mixture can be written as 
\begin{equation*}
\Ftail(x)=\sum_{i=1}^{\infty}p(i)\Gtail(x/i).
\end{equation*}

Recall that the expression of $\Gtail(\cdot)$ has been given in \eqref{PH tail}, then we have
\begin{align*}
 \Ftail(x)=\sum_{i=1}^{\infty}\sum_{j=1}^{m}\sum_{k=0}^{\eta_j-1}c_{jk}\left(\frac{x}{i}\right)^{k}\e^{-\lambda_j x/i}\frac{i^{-\alpha}}{\zeta(\alpha)}
                =\sum_{j=1}^{m}\sum_{k=0}^{\eta_j-1}\sum_{i=1}^{\infty}\dfrac{c_{jk}x^k}{\zeta(\alpha)}i^{-(\alpha+k)}\e^{-\lambda_j x/i}. 
\end{align*}
Consider the functions $I'_{jk}(x;y)=x^ky^{-(\alpha+k)}\e^{-\lambda_j x/y}$ and note 
that each of these functions attains their single 
local maximum at $\hat{y}={\lambda_j x}(\alpha+k)^{-1}>0$, for all $x>0$. 
Therefore,
\begin{equation*}
\int_0^{\infty}I'_{jk}(x;y)\dd y-M_{jk}(x;\hat{y})\leq \sum_{i=1}^{\infty}x^{k}i^{-(\alpha+k)}\e^{-\lambda_j x/i}
\leq\int_0^{\infty}I'_{jk}(x;y)\dd y+M_{jk}(x;\hat{y}).
\end{equation*}
Observe that
\begin{equation*}
 M_{jk}(x;\hat{y})=x^k\e^{-(\alpha+k)}\left(\frac{\lambda_j}{\alpha+k}\right)^{-(\alpha+k)}x^{-(\alpha+k)}=cx^{-\alpha},
\end{equation*}
and
\begin{equation*}
 I'_{jk}(x):=x^k\int_0^{\infty}y^{-(\alpha+k)}\e^{-\lambda_j x/y}\dd y=\frac{\Gamma(\alpha+k-1)}{\lambda^{\alpha+k-1}}x^{-\alpha+1},
\end{equation*}
so $M_{jk}(x;\hat{y})$ is of negligible order with respect to $I'_{jk}(x)$.  Then it follows that
\begin{align*}
\Ftail(x)\sim \sum_{j=1}^{m}\sum_{k=0}^{\eta_j-1}\dfrac{c_{jk}}{\zeta(\alpha)}I'_{jk}(x)
=\sum_{j=1}^{m}\sum_{k=0}^{\eta_j-1}\dfrac{c_{jk}\Gamma(\alpha+k-1)}{\zeta(\alpha)\lambda^{\alpha+k-1}}x^{-\alpha+1},\quad x\to\infty.
\end{align*}
Thus $F(x)\in$MDA($\Phi_{\alpha-1}$).
Let $C=\sum\limits_{j=1}\limits^{m}\sum\limits_{k=0}\limits^{\eta_j-1}\dfrac{c_{jk}\Gamma(\alpha+k-1)}{\zeta(\alpha)\lambda^{\alpha+k-1}}$, then the norming constants can be chosen as 
\begin{equation*}
d_n=0,\quad c_n=\left(\frac{1}{\Ftail}\right)^{\leftarrow}(n)=\left(\frac{C}{n}\right)^{\frac{1}{\alpha-1}}.
\end{equation*}
\end{proof}

\begin{Example}[Geometric scaling]
Let $H\sim\mathrm{Geo}(p)$ and $G$ be PH distribution whose sub-intensity matrix
has only real eigenvalues. Then $F$ is a subexponential distribution 
in the Gumbel domain of attraction.
\end{Example}
\begin{proof}
Let
$p(i)=pq^i$ where $q=1-p$.
Since the geometric distribution has unbounded support, then the associated phase-type 
scale mixture is heavy-tailed. We next verify that it belongs to the Gumbel domain of attraction.
\begin{align*}
\Ftail(x)=\sum_{i=1}^{\infty}\Gtail\left(x/i\right)pq^i.
\end{align*} 
Let $I'(x;y)=\Gtail(x/y)p \exp\{-|\log q|y\}$  satisfies the conditions in Proposition \ref{integral approx}.  
Since the sine and cosine functions are bounded, then it is not difficult to use Proposition \ref{myprop2.1}
to show that there exists a constant $c_1$ such that
\begin{equation*}
M(x):=I(x;\hat{y})\le x^{\frac{k}{2}}\e^{-2\sqrt{x\lambda  |\log q|}}(c_1+\oh(1)),\qquad x\to\infty,
\end{equation*}
where $\lambda$ is the largest eigenvalue in absolute value and $k$ is its largest multiplicity.
If the sub-intensity matrix has real eigenvalues then by using Lemma 2.1 in \citep{ArendarczykDebicki2011} we obtain that
\begin{align*}
\int_0^{\infty}I'(x;y)\dd y&=p\int_0^\infty \Gtail(x/y)\e^{-y|\log q|}\dd y=
 x^{k/2+1/4}\e^{-2\sqrt{x\lambda|\log q|}}(C_1+\oh(1)),\qquad x\to\infty.
\end{align*} 
So, the value of $M(x)$ is asymptotically negligible with respect to the value of the integral
and we conclude that

\begin{equation*}
 \Ftail(x)\sim p\int_{0}^{\infty}\Gtail(x/y)\e^{-y|\log q|}\dd y =\frac{p}{|\log q|}\int_0^\infty \Gtail(x/y)\dd H(y),
\end{equation*}
where $H\sim \exp(|\log q|)$.  Hence, by tail equivalence, the distribution $F$ inherits all the asymptotic
properties of its continuous counterpart, namely, a phase-type scale distribution with exponential scaling
distribution with parameter $|\log q|$.
%
\end{proof}

\begin{Remark}
 We shall recall that the geometric version can be seen as the discrete counterpart of the exponential distribution
 obtained as a discretization.  More precisely, the geometric distribution can be seen as a 
 distribution supported over $\mathbb{Z}^+$ and defined by 
 \begin{equation*}
   H(k)=\mathcal{H}(k), \quad k=0,1,2,\cdots,
 \end{equation*}
 where $\mathcal{H}\sim\mbox{exp}(|\log q|)$. The probability mass function of $H$ is given by 
 $h(k)=\mathcal{H}(k)-\mathcal{H}(k-1)$.
 
 This idea can be extended in order to select scaling distributions for approximating heavy-tailed distributions in the Gumbel
 domain of attraction.  Suppose we want to approximate the tail probability of an absolutely continuous distribution $\mathcal{H}$
 supported over $(0,\infty)$ via a discrete phase-type scale mixture distribution. One way to proceed is to construct
 a discrete distribution supported over $\mathbb{N}$ defined by $h(k)=\mathcal{H}(k)-\mathcal{H}(k-1)$; we refer to this
 construction as a \emph{discretization} of $\mathcal{H}$.
 Moreover, the density of $\mathcal{H}$ can be used to construct a function $I'(u;k)$.   In such a case the tail behavior of a phase-type scale mixture
 having a discretized scaling distribution inherits the asymptotic properties of its continuous counterpart.

 This idea is better illustrated with the following example, which suggests a methodology for approximating the tail
 probability of a lognormal distribution.
 
\end{Remark}

\begin{Example}[Lognormal discretization]
Let $H$ be a discrete lognormal distribution with parameters $\mu$, $\sigma$ and supported over $\{0,1,2,\cdots\}$. Assume $\mu=0$.
The tail probability $\Ftail$ is given by
\begin{equation*}
\Ftail(x)=\sum_{i=1}^{\infty}\Gtail(x/i)\left[H(i)-H(i-1)\right]=\sum_{i=1}^{\infty}\Gtail(x/i)\int_{i-1}^ih(y)\dd y,
\end{equation*}
where $h(\cdot)$ is the density of lognormal distribution.
Since $\Gtail(x/y)$ is increasing in $y$, then we can easily find a lower bound: 
\begin{equation*}
\Ftail(x)= \sum_{i=1}^{\infty}\int_{i-1}^i\Gtail(x/i)h(y)\dd y\ge \int_0^{\infty}\Gtail(x/y)h(y)\dd y.
\end{equation*}

For the upper bound, we have
\begin{align*}
\Ftail(x)&\le \sum_{i=1}^{\infty}\int_{i-1}^i\Gtail(x/(y+1))h(y)\dd y=
\sum_{i=1}^{\infty}\int_{i-1}^i\Gtail(x/(y+1))[h(y)-h(y+1)+h(y+1)]\dd y\\
&\le\int_0^{\infty}\Gtail(x/y)h(y)\dd y+\int_0^{\infty}\Gtail(x/(y+1))[h(y)-h(y+1)]\dd y.
\end{align*}

For the second integral  in the above, we have
\begin{align*}
&\int_0^{\infty}\Gtail(x/(y+1))[h(y)-h(y+1)]\dd y\\
=&\int_0^{1}\Gtail(x/(y+1))[h(y)-h(y+1)]\dd y+\int_1^{\infty}\Gtail(x/(y+1))[h(y)-h(y+1)]\dd y\\
\le & c_1\Gtail(x/2)+c_2\int_1^{\infty}\Gtail(x/(y+1))\dfrac{h(y+1)}{(y+1)^{\beta}}\dd y,
\end{align*}
where $c_1$, $c_2>0$ are some constants and $0<\beta<1$.

It is not difficult to obtain this upper bound: firstly, it is easy to prove 
for $y\ge 1$, 
$\log(y+1)-\log(y)\le 1/y$, consequently, $\log^2(y+1)-\log^2(y)\le 2\log(y+1)/y$;
then we have 
\begin{align*}
\dfrac{h(y)}{h(y+1)}-1&=\dfrac{y+1}{y}\exp\left\{\dfrac{\log^2(y+1)-\log^2(y)}{2\sigma^2}\right\}-1\\
&\le \exp\left\{\dfrac{1}{y}+\dfrac{\log(y+1)}{\sigma^2y}\right\}-1\\
&\le c\left(\dfrac{1}{y}+\dfrac{\log(y+1)}{\sigma^2y}\right), \text{where}\ 
c>0\  \text{is some constant},\\
&\le \dfrac{c_2}{(y+1)^{\beta}}.
\end{align*}

Define 
\begin{equation*}
I'_{jk}(x):=x^k\int_0^{\infty}y^{-k}\e^{-\lambda_jx/y}h(y)\dd y.
\end{equation*}
From Example \ref{myex lognormal}, we know that 
\begin{align*}
\int_0^{\infty}\Gtail(x/y)h(y)\dd y &= \sum_{j=1}^{m}\sum_{k=0}^{\eta_j-1} c_{jk}\int_0^{\infty}(x/y)^k\e^{-\lambda_jx/y}h(y)\dd y=\sum_{j=1}^{m}\sum_{k=0}^{\eta_j-1} c_{jk}\dfrac{(-1)^kx^k}{\lambda_j^k}\mathcal{L}_Y^{(k)}(\lambda_jx)\\
&=\sum_{j=1}^{m}\sum_{k=0}^{\eta_j-1} c_{jk}\, \frac{x^k}{\lambda_j^k}\mathcal{L}_{Y}\exp\{-k\omega_0(\lambda_jx)+\dfrac{1}{2}\sigma_0(\lambda_jx)^2k^2\}.
\end{align*}
So
\begin{align*}
I'_{jk}(x)
&=\left(\frac{x}{\lambda_j}\right)^k\mathcal{L}_Y\exp\{-k\omega_0(\lambda_jx)+\dfrac{1}{2}\sigma_0(\lambda_jx)^2k^2\}.
\end{align*}
It is obvious that $c_1\Gtail(x/2)$ vanishes faster than $I'_{jk}(x)$,
so we can define
\begin{align*}
M_{jk}(x):=x^k\int_0^{\infty}y^{-k-\beta}\e^{-\lambda_jx/y}h(y)\dd y,
\end{align*}


%

since 
\begin{align*}
&c_2\int_1^{\infty}\Gtail(x/(y+1))\dfrac{h(y+1)}{(y+1)^{\beta}}\dd y
=c_2\int_2^{\infty}\Gtail(x/y)\dfrac{h(y)}{y^{\beta}}\dd y\\
&\le\sum_{j=1}^{m}\sum_{k=0}^{\eta_j-1} c_{jk}\int_0^{\infty}(x/y)^ky^{-\beta}\e^{-\lambda_jx/y}h(y)\dd y.
\end{align*}

By a similar approximation as in Example \ref{myex lognormal}, we can see
\begin{align*}
M_{jk}(x)&=(-1)^{k+\beta}\frac{x^k}{\lambda_j^{k+\beta}}\mathcal{L}_Y^{(k+\beta)}(\lambda_jx)\\
&=\frac{x^k}{\lambda_j^{k+\beta}}\mathcal{L}_Y\exp\{-(k+\beta)\omega_0(\lambda_jx)+\dfrac{1}{2}\sigma_0(\lambda_jx)^2(k+\beta)^2\}.
\end{align*}
So  $M_{jk}(x)$ is negligible compared to integral $I'_{jk}(x)$. 
Thus, the phase-type scale mixture distribution with discrete lognormal scaling 
has the same asymptotic behavior as the phase-type scale mixture distribution 
with lognormal scaling.
\end{Example}

\subsection{Non-lattice supports}

The examples in the previous subsection may suggest that a phase-type scale mixture having a discretized scaling distribution
will inherit the asymptotic properties of its continuous counterpart.  
However, such a discretization cannot be made arbitrarily.  The following example illustrates this fact.
\begin{Example}
Let $H\in\mathcal{R}_{-\alpha}$ be a continuous distribution and $S$ be a discrete random variable supported over $\{s_1,s_2,\cdots\}$ satisfying
\begin{equation*}
\Prob(S=s_i)=H(s_{i})-H(s_{i-1}), \quad i=1,2,\cdots.
\end{equation*}
Suppose there exists $\epsilon>0$ and $i_0\in\nat$ such that $\forall i>i_0$,  it holds that
$s_{i+1}>s_i(1+\epsilon)$. Then
\begin{align*}
\limsup_{x\to\infty}\dfrac{\Prob[S>(1+\epsilon)x]}{\Prob[S>x]}
 =\limsup_{i\to\infty}\dfrac{\Prob[S>(1+\epsilon)s_i]}{\Prob[S>s_i]}
=\limsup_{i\to\infty}\dfrac{\Prob[S>s_{i}]}{\Prob[S>s_{i}]}
=1.
\end{align*}
Then $S$ does not have a 
regularly varying distribution.
Suppose that $Y\sim\mbox{Erlang}(\lambda,k)$. 
According to Example 4.4 in \cite{MikoschJacobsenRosinski2009}, the distribution of phase-type scale mixture
random variable $S\cdot Y$ is not regularly varying.
\end{Example}

Nevertheless, such a discretization will provide a \emph{reasonable} approximation to a regularly varying distribution.
The following is a continuation of our previous example and it shows that such a distribution satisfies an analogue
of Breiman's lemma.

\begin{Example}\label{converse of Breiman}
Let $K>0$ and define $H_K$ a discrete distribution supported over $\{s_i:i\in\mathbb{Z}^+\}$,
where $s_i=\exp(i/K)$, and determined by
\begin{equation*}
 H_K(s_i)=1-s_{i}^{-\alpha}, \qquad \forall i\in\mathbb{Z}^+.
\end{equation*}
The distribution $H_K$ can be seen as a discretization 
over a geometric progression of a Pareto distribution having tail probability
$H(x)=x^{-\alpha}$ supported over $[1,\infty)$.  The following argument shows that $H_K$ is no longer a regularly varying distribution.  
Notice that for all $t>1$
there exist $n\in\mathbb{Z}^+$  such that $s_{n}< t\le s_{n+1}$, hence
\begin{align*}
\liminf_{x\to\infty}\frac{\Htail_K(xt)}{\Htail_K(x)}&
=s^{-\alpha}_{n+1}, &
\limsup_{x\to\infty}\frac{\Htail_K(xt)}{\Htail_K(x)}&
=\begin{cases}s^{-\alpha}_n&t<s_{n+1}\\ s^{-\alpha}_{n+1}&t=s_{n+1}. \end{cases}
\end{align*}
Thus, according to Example 4.4 in \cite{MikoschJacobsenRosinski2009}, the Mellin--Stieltjes convolution of an Erlang 
distribution $G$ with the distribution $H$ given above is no longer of regular variation (the conditions described in 
Proposition \ref{integral approx} are not satisfied for this example either).  In spite of this, we can still analyse  
certain aspects of the asymptotic behavior of such a Mellin--Stieltjes convolution. For that purpose,
note that the following inequalities hold for all $w>1$
\begin{equation*}
 \e^{-\alpha/K}\Htail(w)< \Htail_K(w)\le \Htail(w),
\end{equation*}
hence we obtain that
\begin{align*}
 \e^{-\alpha/K}\int_0^\infty \Htail(x/s)dG(s)< \int_0^\infty \Htail_K(x/s)dG(s)\le \int_0^\infty \Htail(x/s)dG(s).
\end{align*}
Using Breiman's lemma we find that
\begin{align*}
 \e^{-\alpha/K}<\liminf \frac{\Ftail(x)}{M_G(\alpha)\Htail(x)}\le\limsup \frac{\Ftail(x)}{M_G(\alpha)\Htail(x)}\le 1.
\end{align*}
A heuristic interpretation of the inequalities above is that aysmptotically  the tail probability $\Ftail$ \emph{oscillates} 
between two regularly varying tails, so this example illustrates a behavior similar to that described
by Breiman's lemma.
Notice that the range of oscillation collapses as $K\to\infty$, which is consistent
with the fact that $H_K\to H$ weakly.  A better asymptotic approximation in the following argument is particularly sharp for numerical purposes.  Consider
\begin{align*}
\Ftail(x)=\int_0^{\infty}\Gtail\left(x/s\right)\dd H_K(s)&=(1-\e^{-\alpha/K})\sum_{i=0}^{\infty}\Gtail(x\e^{-i/K})\e^{-\alpha i/K}.
\end{align*}
Let $I(x;i)=\Gtail(x\e^{-i/K})\e^{-\alpha i/K}$. The infinite series can be approximated via 
the integral 
\begin{align*}
\int_{0}^{\infty}I(x;y)\dd y
=\int_{0}^{\infty}\Gtail\left( x\e^{-y/K}\right)\e^{-\alpha y/K }\dd y
&=K\int_{1}^{\infty}\Gtail\left(\frac xw\right)w^{-(\alpha+1) }\dd w
 =\frac{K}\alpha\int_{1}^{\infty}\Gtail\left(\frac xw\right)\dd {H}(w).
\end{align*}
Since $G$ is such that $M_G(\alpha+\epsilon)<\infty$ for all $\epsilon>0$, then Breiman's lemma implies that
\begin{align*}
 {\Ftail(x)}&\approx \frac{1-\e^{-\alpha/K}}{\alpha/K}M_G(\alpha)\Htail(x).
\end{align*}
This approximation is consistent with the bounds found above, since for all $w>0$ it holds that
\begin{equation*}
 \e^{-w}\le \frac{1-\e^{-w}}{w}\le 1.
\end{equation*}
Hence, the asymptotic approximation suggested is contained in between the asymptotic bounds
previously found.

\end{Example}

The previous example demonstrates that the tail behavior of a phase-type scale mixture distribution
having a discretized scaling distribution is clearly affected by the selection of the support.
Naturally, better approximations will be obtained by taking a finer partition
of the support. 

The natural choice is to use a discretization of the target distribution over some lattice.
However, this approach is not always  suitable for numerical purposes,
because in practice there is only a finite number of terms of the infinite series that can be computed, so these
series are typically truncated.  
By selecting a discretization over a geometric progression,  we will obtain infinite series
that converge at faster rates, so these can be truncated earlier.  
More importantly, such geometric progressions still provide 
reasonable approximations of the tail probability as shown above.
This approach has been tested successfully in \cite{Nardo2016}, 
where they considered discretizing a Pareto distribution
over a geometric progression and used the corresponding phase-type scale mixture distribution to approximate Pareto claim size distributions in ruin probability calculations. This selection of the scaling
distribution is of critical importance in  \cite{BladtRojas2017} for estimating the parameters
of a phase-type scale mixture distribution via the EM algorithm. Such an estimation procedure
is iterative, so in each step it is necessary to compute a number of sufficient statistics involving 
these infinite series.  The selection of a geometric support allows us to compute the estimators within
a reasonable time.

\section{Conclusion}
\label{mysec5}
We considered the class of phase-type scale mixtures. Such distributions arise
from the product of two random variables $S\cdot Y$, where $S\sim H$ is a nonnegative
random variable and $Y\sim G$ is a phase-type random variable. Such a class is mathematically
tractable and can be used to approximate heavy-tailed distributions.

We provided a collection of results which can be used to determine the asymptotic behavior 
of a distribution in such a class. For instance, if the scaling distribution
$H$ has unbounded support, then the associated phase-type scale mixture distribution 
is heavy-tailed. We also provided verifiable conditions which can be employed
to classify the maximum domains of attraction and determine subexponentiality.
In particular, we were able to find phase-type scale mixture distributions with
equivalent asymptotic behavior for regularly varying and Weibullian distributions.  It is not
the case for the lognormal for which it is more difficult to suggest an appropriate scaling
distribution.

We considered the case of phase-type scale mixture distributions having discrete scaling distributions
since these are of critical importance in applications. We described a simple methodology which
allows to establish the asymptotic proportionality of these distributions with respect to their 
continuous counterparts. 
We exhibited important advantages and limitations of this approach
to approximate heavy-tailed distributions and analysed several important examples.

We remark that most of the results obtained here can be extended to an analogue class of 
\emph{matrix exponential scale mixture distributions} without too much effort. 
We note that some of our results were proven under the assumption that the phase-type distribution
has a sub-intensity matrix having only real eigenvalues.  Nevertheless, we conjecture that such results
holds for general phase-type and matrix-exponential distributions.
We also conjecture that a phase-type distribution is $\alpha$-regularly varying determining but 
this remains an open problem.

\section*{Acknowledgements}
 LRN is supported by Australian
Research Council (ARC) grant DE130100819.  WX is supported by IPRS/APA scholarship at The University of Queensland.

\bibliographystyle{chicagoa}
\bibliography{InfiniteBib,StandardBib}

\section{Appendix}
In this appendix we revise some classical results providing conditions for 
determining if a distribution belongs to the Gumbel domain of attraction and 
if it is subexponential.

A main result in extreme value theory
indicates that a distribution $H$ belongs to the Gumbel domain 
of attraction iff $\Htail$ is tail-equivalent to a von Mises function.
The following provides sufficient conditions for a distribution to be a von Mises function.
\begin{theorem}[\cite{Haan1970}]\label{mydef2.2}
 Let $\Htail$ be a twice differentiable nonnegative distribution with unbounded support.  
 Then $\Htail$ is a von Mises function iff there exists $s_0$ such that
  $\Htail^{\prime\prime}(s)<0$ for all $s>s_0$, and
 \begin{equation}\label{cond.Gumbel}
  \lim_{s\to\infty}\frac{\Htail(s)\Htail^{\prime\prime}(s)}{\left(\Htail^{\prime}(s)\right)^2}=-1.
 \end{equation}
Moreover, von Mises functions are functions of rapid variation \citep[cf.][]{Bingham1987}.
\end{theorem}
\cite{GoldieResnick1988}
provide a sufficient condition for an absolutely continuous distribution $H\in\mathrm{MDA}(\Lambda)$ 
to be subexponential:
\begin{theorem}[\cite{GoldieResnick1988}]
\label{Th.GoldieResnick}
 Let $H\in\mathrm{MDA}(\Lambda)$ be an absolutely continuous function with density $h$, 
 then $H\in\mathcal{S}$ if
 \begin{equation}\label{cond.Gumbel.Sub}
  \liminf_{s\to\infty}\frac{\Htail(ts)}{h(ts)}\frac{h(s)}{\Htail(s)}>1,\qquad \forall t>1.
 \end{equation}
\end{theorem}

Therefore, since a phase-type scale mixture distribution is not only absolutely continuous but twice differentiable and
its second derivative is negative, 
then we can verify if it belongs to the Gumbel domain of attraction by just checking the condition 
\eqref{cond.Gumbel} 
in Theorem \ref{mydef2.2}.  
Subexponentiality can be checked via 
 condition \eqref{cond.Gumbel.Sub} in Theorem \ref{Th.GoldieResnick}. 
\end{document}